\documentclass{amsart}
\usepackage{amsmath,amsfonts,amsthm,amssymb,indentfirst,epic,xurl,graphics,needspace}

\newtheorem{theorem}{Theorem}

\numberwithin{equation}{section}
\numberwithin{theorem}{section}

\renewcommand{\r}{\mathrm}

\begin{document}

\begin{center}
\texttt{Comments, corrections,
and related references welcomed, as always!}\\[.5em]
{\TeX}ed \today
\\[.5em]
\vspace{2em}
\end{center}

\title%
{An observation on factorizations of finite groups}
\thanks{%
Archived at \url{http://arxiv.org/abs/2608.08928}\,.
Readable
at \url{/~gbergman/papers/unpub}.
The latter version may be revised more frequently than the former.
}

\subjclass[2010]{Primary: 20D60.
}
\keywords{factorization of a finite group as a product of subsets}

\author{George M.\ Bergman}
\address{Department of Mathematics\\
University of California\\
Berkeley, CA 94720-3840, USA}
\email{gbergman@math.berkeley.edu}

\begin{abstract}
Given a finite group $G$ and a factorization of its order,
$|G|= a_1 a_2,$ we show that a sufficient condition for
there to exist subsets $A_1,\,A_2\subseteq G$ of
cardinalities $|A_1| = a_1,$ $|A_2| = a_2$ such that
$G = A_1\,A_2$ is that there exist a chain of subgroups
$\{e\} = G_0 < \dots < G_n = G$ such that $a_1$ is the
product of some subfamily of the indices $|G_i:G_{i-1}|$
(and hence $a_2$ is the product of the complementary subfamily).

Some further observations, and references to
related literature, are given.
\end{abstract}
\maketitle

\section{Main result}\label{S.main}

Given a finite group $G,$ and a factorization of its order,
\begin{equation}\begin{minipage}[c]{35pc}\label{d.a1ak}
$|G|\ =\ a_1\,\dots\,a_k\,,$
\end{minipage}\end{equation}
M.\,H.\,Hooshmand \cite{MHH_overflow}, \cite{MHH_factor}
raised the question of whether there
exist subsets $A_1,\dots,A_k$ of $G$ with cardinalities
$|A_i| = a_i$ $(i=1,\dots,k)$ such that
\begin{equation}\begin{minipage}[c]{35pc}\label{d.A1Ak}
$G\ =\ A_1\dots A_k$;
\end{minipage}\end{equation}
equivalently, in view of~\eqref{d.a1ak}, such that
the multiplication map $A_1\times\dots\times A_k\to G$
is one-to-one, equivalently, such that %
that same map is bijective.

For all $k > 2,$ examples are now known
of groups $G$ and $\!k\!$-fold factorizations of $|G|$
with all $a_i>1$ for which no such
decomposition of $G$ exists~\cite{GMB}, \cite{MK}, \cite{McC}.
But it is not known whether there are any such counterexamples
with~$k=2$~\cite[Question~20.37]{Kourovka21}.

We prove below a sufficient condition
(stated in the abstract above) on $G,$ $a_1,$ and $a_2$
for the existence of a decomposition $G=A_1\,A_2$ of the desired sort.

In dealing with products of finite sequences of subsets of a
group $G,$ we shall understand the product of the {\em empty}
sequence of subsets
(the sequence with no terms) to be the singleton~$\{e\}.$
It is easy to see that this convention
is compatible with composition of sequences.

Here is our result:

\begin{theorem}\label{T.main}
Let $G$ be a finite group, and
\begin{equation}\begin{minipage}[c]{35pc}\label{d.G0Gn}
$\{e\}\,=\,G_0\ <\ G_1\ <\ \dots\ <\ G_n\,=\,G$ \ $(n\geq 0)$
\end{minipage}\end{equation}
a chain of subgroups of $G.$

Then for any factorization $|G| = a_1\,a_2$ with $a_1$ and $a_2$
positive integers, a {\em sufficient}
condition for there to exist subsets $A_1,\,A_2\subseteq G$
having cardinalities $|A_1|=a_1,$ $|A_2|=a_2$ and satisfying
\begin{equation}\begin{minipage}[c]{35pc}\label{d.A1A2}
$G\ =\ A_1\,A_2$
\end{minipage}\end{equation}
is that there exist a subset $s_1\subseteq\{1,\dots,n\}$
such that $\prod_{i\in s_1}\,|G_i:\,G_{i-1}| = a_1,$
equivalently, such that, letting $s_2=\{1,\dots,n\}\setminus s_1,$
we have $\prod_{i\in s_2}\,|G_i:\,G_{i-1}| = a_2.$
\end{theorem}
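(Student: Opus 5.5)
We argue by induction on $n$, building $A_1$ and $A_2$ simultaneously from transversals of the successive steps of the chain. For each $i$ choose a left transversal $T_i$ of $G_{i-1}$ in $G_i$, so that $|T_i|=|G_i:G_{i-1}|$ and $G_i=T_i\,G_{i-1}$ with unique expression. Iterating gives the standard bijective factorization
\[
G \;=\; T_n\,T_{n-1}\cdots T_1 ,
\]
in which the multiplication map $T_n\times\dots\times T_1\to G$ is bijective. The naive idea is to put $A_1$ equal to the product of the $T_i$ with $i\in s_1$ and $A_2$ equal to the product of those with $i\in s_2$. However, the factors must keep their order, and $s_1,s_2$ are interleaved. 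So we cannot simply regroup unless the transversals commute appropriately. The real work is to handle this interleaving.

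The key step is a ``swap'' lemma that lets us move one transversal past another. Take adjacent indices with $i\in s_2$ immediately to the left of $j=i-1\in s_1$ in the product, so that the word contains $T_i\,T_{i-1}$. We want to replace this by $T'_{i-1}\,T'_i$ with $|T'_{i-1}|=|T_{i-1}|$ and $|T'_i|=|T_i|$, with the product set and bijectivity unchanged. Equivalently, $G_i/G_{i-2}$ (as a set of left cosets of $G_{i-2}$) should be factored in the opposite order.

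To make this work we do not insist that the sets be transversals of subgroups. Instead we use the more flexible notion of a factorization $G_i = X\,Y\,G_{i-2}$ with $|X||Y|=|G_i:G_{i-2}|$. A cleaner route is to use right transversals for some steps and left transversals for others. For $i\in s_1$ we want $T_i$ placed on the left, and for $i\in s_2$ on the right. Accordingly, write $G_i=G_{i-1}R_i$ with $R_i$ a right transversal whenever $i\in s_2$, and $G_i=L_iG_{i-1}$ with $L_i$ a left transversal whenever $i\in s_1$. Then build inductively
\[
G_i \;=\; B_i\,C_i,\qquad B_i=\prod_{j\le i,\ j\in s_1}\text{(left factors)},\quad C_i=\prod_{j\le i,\ j\in s_2}\text{(right factors)}.
\]
The recursion is as follows. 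If $i\in s_1$, then $G_i=L_iG_{i-1}=L_iB_{i-1}C_{i-1}$, so we set $B_i=L_iB_{i-1}$ and $C_i=C_{i-1}$. If $i\in s_2$, then $G_i=G_{i-1}R_i=B_{i-1}C_{i-1}R_i$, so we set $B_i=B_{i-1}$ and $C_i=C_{i-1}R_i$. Taking $A_1=B_n$ and $A_2=C_n$ then gives $G=A_1A_2$.

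The main obstacle is verifying the cardinalities, namely $|B_i|=\prod_{j\le i,\,j\in s_1}|G_j:G_{j-1}|$ and similarly for $C_i$. This amounts to injectivity of the multiplication maps. It follows by counting. Since $|G_i|=|L_i|\,|G_{i-1}|$ and $|G_{i-1}|=|B_{i-1}|\,|C_{i-1}|$, the surjection $L_i\times B_{i-1}\times C_{i-1}\to G_i$ is between sets of equal size, hence bijective. In particular $L_i\times B_{i-1}\to B_i$ is injective, and likewise in the other case. With this observation the induction closes, and the base case $G_0=\{e\}=\{e\}\{e\}$ uses the empty-product convention stated before the theorem.
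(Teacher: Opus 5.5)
Your final argument is correct and is exactly the paper's proof: left transversals are multiplied on the left for $i\in s_1$, right transversals on the right for $i\in s_2$, with induction along the chain from $G_0=\{e\}=\{e\}\{e\}$. The paper treats the $s_2$ case by left--right symmetry and leaves injectivity implicit where you give the counting argument; your opening discussion of regrouping $T_n\cdots T_1$ and the unproved ``swap lemma'' is never used and can simply be deleted.
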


\begin{proof}
Given a finite group $G,$ a chain~\eqref{d.G0Gn}, and
complementary sets $s_1,\ s_2\subseteq\{1,\dots,n\},$
we shall show by induction on $j$ that for each $j\leq n,$
\begin{equation}\begin{minipage}[c]{35pc}\label{d.A1jA2j}
there exists a factorization $G_j=A_{1,j}\ A_{2,j}$ such that \\
$|A_{1,j}|\ =\ \prod_{i\in s_1\cap\{1,\dots,j\}}\,|G_i:\,G_{i-1}|$ \ and
\ $|A_{2,j}|\ =\ \prod_{i\in s_2\cap\{1,\dots,j\}}\,|G_i:\,G_{i-1}|.$
\end{minipage}\end{equation}

This is trivially true for $j=0.$
(In that case the numerical products shown are over the empty set,
hence their values are both $1,$ and
$G_0=\{e\}$ indeed has the factorization $\{e\}=\{e\}\{e\}=
(\prod_\emptyset)\,(\prod_\emptyset).)$

So let $0\leq j<n,$ and assume inductively that
we have a factorization $G_j=A_{1,j}\ A_{2,j}$ as in~\eqref{d.A1jA2j}.

The integer $j\,{+}\,1$ occurs in exactly
one of the subsets $s_1,$ $s_2.$
Without loss of generality let us assume
\begin{equation}\begin{minipage}[c]{35pc}\label{d.i_in_s1}
$j\,{+}\,1\,\in\,s_1.$
\end{minipage}\end{equation}
(``Without loss of generality'' because our inductive
hypothesis and desired conclusion are symmetric
with respect to reversal of order of multiplication, which
interchanges the roles of $s_1$ and $s_2.)$
Assuming~\eqref{d.i_in_s1},
\begin{equation}\begin{minipage}[c]{35pc}\label{d.Cj+1}
let $C_{j+1}$ be any set of representatives
of the left cosets of $G_j$ in $G_{j+1},$ and let \\
$A_{1,j+1}\ =\ C_{j+1}\,A_{1,j},$ \ $A_{2,j+1}\ =\ A_{2,j}.$
\end{minipage}\end{equation}

Given that $A_{1,j}$ and $A_{2,j}$
have the cardinalities indicated in~\eqref{d.A1jA2j},
and assuming~\eqref{d.i_in_s1}, we see that
the new sets $A_{1,j+1}$ and $A_{2,j+1}$ will
have cardinalities as in the $j\,{+}\,1$ case of~\eqref{d.A1jA2j}.
Moreover, in view of~\eqref{d.Cj+1},
\begin{equation}\begin{minipage}[c]{35pc}\label{d.CiA1}
$A_{1,j+1}\,A_{2,j+1}\ =\ C_{j+1}\ A_{1,j}\ A_{2,j}\ =\ C_{j+1}\ G_j\
=\ G_{j+1}.$
\end{minipage}\end{equation}

So we have proved the $j\,{+}\,1$ case of~\eqref{d.A1jA2j}; so by
induction,~\eqref{d.A1jA2j} holds for $j=n,$ our desired conclusion.
\end{proof}

\section{Further observations and thoughts}\label{S.obs}

Given a group $G,$ the set of factorizations $|G| = a_1\,a_2$
that can be realized as in Theorem~\ref{T.main}
depends on what families of integers occur as the indices
$|G_i:\,G_{i-1}|$ for chains of subgroups~\eqref{d.G0Gn}.
If $G$ is solvable, then there exist such chains in
which every index is prime, from which it easily follows
that {\em every} factorization $|G| = a_1\,a_2$ can be so realized.

(In fact, the case of Theorem~\ref{T.main} in which all of the
indices $|G_i:\,G_{i-1}|$ are prime is \cite[Proposition~2]{BGV},
and the proof of that Proposition uses the same approach as
that of the above Theorem, and easily generalizes to give that Theorem.
Hence my decision not to publish the present note.)

There are also non-solvable groups for which that works.
For instance, one can get such a chain for the alternating group
$\r{Alt}(5),$ a simple group, by starting with such a chain for its
solvable subgroup $\r{Alt}(4),$ and then adding, at the end, the step
$\r{Alt}(4) < \r{Alt}(5),$ since $|\r{Alt}(5):\r{Alt}(4)| = 5,$
a prime.
On the other hand, this trick does not work for $\r{Alt}(6),$
since $|\r{Alt}(6):\r{Alt}(5)| = 6$ is not prime.
(And, in fact, this group has no subgroups of prime index
\cite[p.685, lines 5-7 after Proposition~2]{BGV}.)

Note, however, that the only $\!2\!$-term factorizations of
$|\r{Alt}(6)|=2^3\cdot 3^2\cdot 5 = 360$ that {\em cannot} be put
together from the prime indices in a chain for
$\r{Alt}(5),$ together with the index $6$ at the added step
$\r{Alt}(5) < \r{Alt}(6),$ are those in which all
occurrences of $2$ occur in one factor and all occurrences
of~$3$ in the other.
Up to order of factors there are just two
such factorizations: \ $2^3\cdot(3^2\cdot 5)$
and $3^2\cdot(2^3\cdot 5);$ and these can be realized using
two other chains of subgroups.
Namely, writing $\r{Syl}_p(\ )$ for ``a Sylow-$\!p\!$ subgroup of'',
we get these factorizations using the chains $\{e\} < \r{Syl}_2(G) < G$
and $\{e\} < \r{Syl}_3(G) < G$ respectively.
So every factorization of $|\r{Alt}(6)|$
{\em can} be realized as in Theorem~\ref{T.main} using one
or another chain of subgroups.

On the other hand, in \cite[p.685, middle paragraph]{BGV}
it is noted that the group $G = \r{SL}_2(8)$ of $2\times 2$
matrices of determinant $1$ over the field of $8$ elements has
properties which, from the point of view of the present note, say
that among the factorizations of $|G| = 504 = 2^3\cdot 3^2\cdot 7,$
neither of the two factorizations
\begin{equation}\begin{minipage}[c]{35pc}\label{d.21etc}
$21\cdot 24$ \ or \ $12\cdot 42$
\end{minipage}\end{equation}
can be realized as in Theorem~\ref{T.main}.
Indeed, every maximal proper subgroup of $G$ has one
of the orders $2^3\cdot 7$ or $2\cdot 3^2$ or $2\cdot 7$
\cite[p.2]{atlas}, hence these subgroups have indices
\begin{equation}\begin{minipage}[c]{35pc}\label{d.indices}
$3^2,$ \ $2^2{\cdot}7,$ \ $3^2{\cdot}2^2.$
\end{minipage}\end{equation}
Hence any chain~\eqref{d.G0Gn} for this group must have the index
at the last step divisible by one of the values~\eqref{d.indices}.
If that index is divisible by the first or last
of those terms, then in any factorization~\eqref{d.A1A2}
realized using such a chain, one factor must be
divisible by $3^2,$ which is not true of either of
the factorizations in~\eqref{d.21etc}, while if that index
is divisible by the middle term of~\eqref{d.indices}, then
in any factorization so realized,
one factor would have to be divisible by $2^2\cdot 7,$
which is again not true of either of the factorizations
of~\eqref{d.21etc}.

Nevertheless, it is shown in \cite{BGV} that the above group has
factorizations with the indicated cardinalities, using decompositions
$G = (H A'_1)(A'_2 K),$ where
$H$ and $K$ are subgroups of relatively prime orders,
and $A'_1$ and $A'_2$ (there denoted $A_0$ and $B_0)$
are subsets of small cardinalities whose existence the authors have
verified by computer searches (so their existence presumably does not
follow from any known general group-theoretic argument).

Curiously, although for the above group $G,$ not all factorizations
of its order can be realized as in Theorem~\ref{T.main},
all factorizations of the order of $G\times \mathbb{Z}_3$
can be so realized.
Indeed, given a factorization of the order of  that
product group, $2^3\cdot 3^3\cdot 7$
into two factors, one of those two factors must be divisible by $3^2.$
It is easy to see that on dividing an appropriate
one of those two factors by $3,$ we get a factorization of $|G|$
in which one of the factors is still divisible by $3^2.$
Hence {\em that} factorization of $|G|,$
not being one of~\eqref{d.21etc},
can be realized as in Theorem~\ref{T.main}.
Combining that factorization of $G$ with one of the factorizations
$\mathbb{Z}_3\times\{e\} \cong \mathbb{Z}_3 \cong
\{e\} \times \mathbb{Z}_3,$
we get a factorization of $G\times \mathbb{Z}_3$
realizing the given factorization of its order.

It is clear that if two groups $G_1$ and $G_2$ each have the
property that every factorization of its order can be realized
via Theorem~\ref{T.main}, then so does $G_1\times G_2$ (or more
generally, any extension of $G_1$ by $G_2).$
The above example shows, surprisingly, that the converse is not true.

In a different direction, one may ask whether an
approach similar to the one used in proving Theorem~\ref{T.main}
can be used to construct, from a chain~\eqref{d.G0Gn},
realizations of some class of
factorizations of $|G|$ with more than
two terms, e.g., $|G| = a_1\,a_2\,a_3.$
Note that in the construction of Theorem~\ref{T.main},
the choice, for each factor $|G_i:\,G_{i-1}|,$ of whether to
multiply by a set of that cardinality on the right or on the left
allows us to choose freely whether to bring it in as a factor
of $A_1$ or $A_2.$
If we are, rather, aiming for a factorization
$G = A_1\,A_2\,A_3,$ then we need to require
that in our expression for $G$ as a product of sets of
representatives of left and right cosets,
\begin{equation}\begin{minipage}[c]{35pc}\label{d.A2}
Of the sets whose product is to give the middle factor
$A_2,$ those that are adjoined on the left should arise from an
{\em initial} substring of $s_1,$ and those adjoined on the right
from an {\em initial} substring of~$s_2.$
\end{minipage}\end{equation}

Let us look at the case where $G$ is the
alternating group $\r{Alt}(4),$ for which it is known that
\begin{equation}\begin{minipage}[c]{35pc}\label{d.GMB_Prop}
\cite[Proposition~1.4]{GMB}
there is no decomposition $\r{Alt}(4) = A_1\,A_2\,A_3$ realizing
the factorization of $|\r{Alt}(4)| = 12$ as $2\cdot 3\cdot 2.$
\end{minipage}\end{equation}
and try to see what the obstruction is to obtaining
such a decomposition from a chain of subgroups.
The group $\r{Alt}(4)$
has, up to isomorphism, just two maximal chains of subgroups:
\begin{equation}\begin{minipage}[c]{35pc}\label{d.A4_long}
$\{e\}\ <\ \mathbb{Z}_2\ <\ %
\mathbb{Z}_2 \times \mathbb{Z}_2\ <\ \r{Alt}(4),$\ \ and
\end{minipage}\end{equation}
\begin{equation}\begin{minipage}[c]{35pc}\label{d.A4_short}
$\{e\}\ <\ \mathbb{Z}_3\ <\ \r{Alt}(4).$
\end{minipage}\end{equation}

If we put together cosets of the steps of the
chain~\eqref{d.A4_long} as in Theorem~\ref{T.main},
the step which brings in
a $\!3\!$-element factor is the last step;
but to make $|A_2|=3,$ by~\eqref{d.A2}, that step would have to be
the first that adjoins a factor on the side to which it is added,
hence both the other two factors must have been
adjoined on the other side, giving a decomposition corresponding
to the factorization $12  = 1 \cdot 3 \cdot 4$ or
$12 = 4 \cdot 3 \cdot 1.$
On the other hand, if we try to use~\eqref{d.A4_short},
there are only two factors, so we again cannot get
the desired $\!3\!$-factor expression.

(The above does not constitute a proof of~\eqref{d.GMB_Prop},
since~\eqref{d.GMB_Prop} is not restricted to decompositions arising
by the suggested generalization of the approach of Theorem~\ref{T.main}.
But it does illustrate the complications in trying to use
that approach to study group factorizations of lengths~$>2.)$

The above discussion focused on an obstruction to obtaining
from a chain of subgroups of a group $G$ a set-theoretic
representation of a given $\!k\!$-fold factorization of~$|G|.$
In \cite[Theorem~3.1]{McC} R.~McCulloch describes the conditions
under which that obstruction does not occur, and hence obtains
a sufficient condition for such factorizations of a group to exist.

Considerable information on results and questions on factorization
of groups prior to~\cite{MHH_overflow} is given in~\cite{MK}.

\section{Acknowledgements}\label{S.ackn}
I am indebted to Ryan McCulloch and Mikhail Kabenyuk for helpful
corrections and suggestions on earlier drafts of this note.

\end{document}